\newcommand{\R}{{\mathbb R}}
\newcommand{\Rp}{{\mathbb R^+}}
\newcommand{\const}{{\rm Const}}
\newcommand{\dist}{\mbox{\rm dist}}
\newcommand{\N}{{\mathbb N}}
\newcommand{\LL}{{\mathbb L}}
\newcommand{\arrowd}{\mathop{\longrightarrow}_{\cal D}}
\newcommand{\arrowp}{\mathop{\longrightarrow}_{\cal P}}
\newcommand{\Rd}{{{{\mathbb R}^d}}}
\newtheorem{theorem}{\bf Theorem}[subsection]
\newtheorem{remark}[theorem]{\bf Remark}%[subsection]
\newenvironment{proof}{{\sc Proof.}}{\hfill $\Box$}
\newcommand{\nsubsection}{\setcounter{equation}{0}\subsection}
\begin{document}

\title {Weak and strong approximations of reflected diffusions
\\ via penalization methods}
\author { {\small Leszek S\l omi\'nski}
\\{\small  Faculty of Mathematics and Computer Science,
Nicolaus Copernicus University}\\
{\small ul. Chopina 12/18, 87--100 Toru\'n, Poland}}
\date{}
\maketitle
\begin{abstract}
We study approximations of reflected It\^o diffusions on convex
subsets $D$ of $\Rd$ by solutions of stochastic differential
equations with penalization terms. We assume that the diffusion
coefficients are merely measurable (possibly discontinuous)
functions. In the case of Lipschitz continuous coefficients we
give the rate of ${\LL}^p$ approximation for every $p\geq1$. We
prove that if $D$ is a convex polyhedron then the rate is ${\cal
O}\big((\frac{\ln n}n)^{1/2}\big)$, and in the general case the
rate is ${\cal O}\big((\frac{\ln n}n)^{1/4}\big)$.
\end{abstract}
%\begin{abstract}
%We study approximations of reflected diffusions  by solutions of
%stochastic differential equations with penalization terms. We
%assume that the domain is a general convex set. The coefficients
%are merely measurable (possibly discontinuous) functions and the
%diffusion coefficients my degenerate. In the case of Lipschitz
%continuous coefficients we give the rate of approximation.
%\end{abstract}
\noindent {\bf Keywords}  Reflected diffusions, penalization
methods.\newline {\bf 2010 Mathematics Subject Classification} 60
H 20, 60 J 60, 60 F 15.\newline
%{\em Short tittle}: SDEs with reflection.

\footnotetext{Research supported by Polish Ministry of Science and
Higher Education Grant N N201 372 436.}

\nsubsection{Introduction} In the paper we study weak and strong
approximations of solutions of  $d$--dimensional stochastic
differential equations (SDEs)
\begin{equation}
\label{eq1.1} X_t = x_0 + \int_0^t \sigma(s,X_s)\,dW_s+\int_0^t
b(s,X_s)\,ds+K_t, \quad t\in{\Bbb R}^+
\end{equation}
 with reflecting boundary condition
on a convex domain $D$. Here $x_0\in\bar D=D\cup\partial D$, $X$
is a reflecting process on $\bar D$,  $K$ is a bounded variation
process with variation $|K|$  increasing only, when
$X_t\in\partial D$, $W$ is a $ d$-dimensional standard Wiener
process  and $\sigma:{\Bbb R}^+\times\Rd\rightarrow{\Bbb
R}^d\otimes{\Bbb R}^d$, $b:{\Bbb R}^+\times\Rd\rightarrow{\Bbb
R}^d$ are measurable (possibly discontinuous) functions. Suppose
that for $n\in\Bbb N$ we are given measurable coefficients
$\sigma_n:{\Bbb R}^+\times\Rd\rightarrow{\Bbb R}^d\otimes{\Bbb
R}^d$, $b_n:{\Bbb R}^+\times\Rd\rightarrow{\Bbb R}^d$ and a
standard Wiener process $W^n$, and assume that there exists a
solution $X^n$ of the following SDE with penalization term
\begin{equation}
\label{eq1.2} X^n_t = x_0 + \int_0^t
\sigma_n(s,X^n_s)\,dW^n_s+\int_0^t b_n(s,X^n_s)\,ds
-n\int_0^t(X^n_s-\Pi(X^n_s))ds,\quad t\in{\Bbb R}^+,
\end{equation}
where  $\Pi(x)$  is the projection of $x$  on $\bar D$. The
problem is to find conditions on $\{\sigma_n\},\,\{b_n\}$ ensuring
convergence of $\{X^n\}$ to the reflected diffusion  $X$,  and
secondly, to give the rate of such convergence.

 Reflected diffusions  have many   applications, for instance in queueing
 systems, seismic reliability analysis and  finance (see e.g.
  Asmussen \cite{as}, Dupuis and Ramanan \cite{dr},  Kr\'ee and Soize \cite{KS}, Pettersson \cite{p1}, Shepp and Shiryaev \cite{ss}).
Therefore, the problem of practical approximations of  solutions
of (\ref{eq1.1}) is very important. Discrete penalization schemes
based on the approximation of $X$ by  solutions of equations with
penalization term are well known (see e.g. Pettersson \cite{p2},
Kanagawa and Saisho \cite{ks}, Liu \cite{li}, S\l omi\'nski
\cite{s4}).

Approximation of reflected diffusions via penalization methods was
earlier considered by Menaldi \cite{me}, Menaldi and Robin
\cite{mr}, Lions and Sznitman \cite {ls}, Lions, Menaldi and
Sznitman \cite{lms}, Storm \cite{st}, Saisho and  Tanaka
\cite{st} and many others. Unfortunately, these authors have
restricted themselves to the case of Lipschitz continuous
coefficients.

 In the present paper we consider  measurable coefficients  $\sigma_n,b_n$  such
 that
\begin{equation}
\label{eq1.3} \|\sigma_n(t,x)\|^2+|b_n(t,x)|^2\le
C(1+|x|^2),\quad(t,x)\in{\Bbb R}^+\times\Rd,\quad n\in\N
\end{equation}
for some  $C>0$.
 To prove convergence of $\{X^n\}$  to $X$ we first show that under (\ref{eq1.3})  the sequence
$\{X^n\}$ is very close to the sequence   $\{\Pi(X^n)\}$ and we
observe that    $\Pi(X^n)$ is a solution of some Skorokhod problem
(for the definition of the Skorokhod problem see Section 2). Next,
using a well developed theory of convergence of solutions of the
Skorokhod problem  (see e.g. \cite{as,rs1,s2,s4,ta}) we prove our
main approximation results. Moreover,  we are able to strengthen
the rate of the convergence of the penalization method in the
classical case of Lipschitz continuous coefficients $\sigma, b$.

The paper is organized as follows.

In Section 2   we estimate  the $\LL^p$ distance between $X^n$ and
$\bar D$.  Using some new estimates of $\LL^p$-modulus of
continuity of It\^o's processes from Fischer and Nappo \cite{fn}
we prove that under (\ref{eq1.3})  for every $p\geq1$, $T>0$,
\[||\sup_{t\leq T}
\dist(X^n_t,\bar D)||_p={\cal O}\big((\frac{\ln n}n)^{1/2}\big),\]
where $||\cdot||_p=(E(\cdot)^p)^{1/p}$ denotes the usual $\LL^p$
norm. We also show that $\{X^n\}$ is tight in $C({\Bbb R}^+,{\Bbb
R}^d)$ and its weak  limit point solve the Skorokhod problem.

Section 3 contains  our main results concerning weak and strong
approximations of solutions of  (\ref{eq1.1}). We consider the set
of conditions on coefficients from the  paper by Rozkosz and S\l
omi\'nski \cite{rs1} on stability of solutions of stochastic
differential equations with reflecting boundary. Roughly speaking,
we assume that $\{\sigma_n\}$, $\{b_n\}$ satisfy (\ref{eq1.3}) and
$\{(\det \sigma_n\sigma^{\star}_n)^{-1}\}$ is locally uniformly
integrable on some set ($\sigma^*_n$ denotes the matrix adjoint to
$\sigma_n$). Then we show that if $\{\sigma_n\},\,\{b_n\}$ tend to
$\sigma,\,b$ a.e. on the set mentioned above   and uniformly on
its completion, then $X^n\arrowd X$,  where $X$ denotes a unique
weak  solution of $(\ref{eq1.1})$. Under the additional
assumptions that $W^n\arrowp W$ and that (\ref{eq1.1}) is pathwise
unique we show that $X^n\arrowp X$. Thus, we generalize earlier
approximation results to equations with possibly discontinuous and
nonelliptic diffusion coefficients and discontinuous drift
coefficients.

Section 4  is devoted to the classical case, where all
coefficients are fixed Lipschitz continuous functions with respect
to $x$  and all stochastic integrals are driven by the same Wiener
process, i.e. $\sigma_n=\sigma$, $b_n=b$, and $W_n=W$, $n\in\N$,
and there is $L>0$ such that
\begin{equation}\label{eq1.4}
\|\sigma(t,x)-\sigma(t,y)\|^2+|b(t,x)-b(t,y)|^2\leq
L|x-y|^2,\quad(t,x)\in{\Bbb R}^+\times\Rd.
\end{equation}
In this case,  if $D$ is a convex polyhedron, we prove that for
every $p\geq1$, $T>0$, \[||\sup_{t\leq T}|X^n_t-X_t|||_p={\cal
O}\big((\frac{\ln n}n)^{1/2}\big).\]  For arbitrary convex domain
we prove that for every $p\geq1$, $T>0$,
\[||\sup_{t\leq T}|X^n_t-X_t|||_p={\cal O}\big((\frac{\ln
n}n)^{1/4}\big).\] Thus, we strengthen earlier results on the
subject proved by Menaldi \cite{me}.

In the sequel we use the following notation. ${\Bbb
R}^+=[0,\infty)$, $C({\Bbb R}^+,{\Bbb R}^d)$ is the space of
continuous functions  $x:{\Bbb R}^+\rightarrow {\Bbb R}^d$
equipped with the topology of uniform convergence on compact
subsets of ${\Bbb R}^+$. For every $x\in C({\Bbb R}^+,{\Bbb
R}^d)$, $\delta>0$, $T>0$ we set
$\omega_{\delta}(x,T)=\sup\{|x_t-x_s|;\,s,t\in[0,T],\,|s-t|\leq
\delta\}$.   ${\Bbb R}^d\otimes{\Bbb R}^d$ is the set of $(d\times
d)$\,-\,matrices.
 The abbreviation $a.e.$  means  ``almost everywhere"  with
respect to the Lebesgue measure, "$\arrowd$", "$\arrowp$" denote
convergence in law and in probability, respectively.

\nsubsection{General results} \label{sec2}

Let  $D$  be a nonempty convex domain in ${\Bbb R}^d$ and let
${\cal N}_x$ denote the set of inward normal unit vectors at
$x\in\partial D$. Note that ${\bf n}\in{\cal N}_{x} $ if and only
if $<y-x,\mbox{\bf n}>\geq 0 $  for every  $y\in\bar D$ (see e.g.
\cite{me,st}). Moreover, if $\dist(x,\bar D)>0$, then
\[
\frac{\Pi(x)-x}{|\Pi(x)-x|}\in{\cal N}_{\Pi(x)}.\] Let $Y$ be an
$\{{\cal F}_t\}$\,-\,adapted process with continuous trajectories.
We will say that a pair $(X,\,K)$  of $\{{\cal F}_t\}$\,-\,adapted
processes is a solution of the Skorokhod problem associated with
$Y$  if
\begin{eqnarray*}
& &X=Y+K,\\
& &X\mbox{ is $\bar D$\,-\,valued},\\
\label{eq8} & &K\mbox{ is a process with locally bounded variation
such
that $K_0=0$ and}  \\
& &\quad K_t=\int_0^t{\bf n}_s\,d|K|_s,\qquad |K|_t=\int_0^t{\bf
1}_{\{X_s\in\partial D\}}\,d|K|_s,
\quad t\in{\Bbb R}^+,\nonumber\\
& &\mbox{where }{\bf n}_s\in{\cal N}_{X_s}\mbox{ if
$X_s\in\partial D$}.
\end{eqnarray*}
It is well known that for every process $Y$  with  continuous
trajectories there exists a unique solution $(X,K)$ of the
Skorokhod problem associated with $Y$ (see e.g. \cite{ce} or
\cite{lau}, where a more general case of c\`adl\`ag processes is
considered). The theory of convergence of  solutions of the
Skorokhod problem is well developed (see e.g.
\cite{as,rs1,s2,s4,ta}). Unfortunately,  solutions of
(\ref{eq1.2}) are not solutions of the Skorokhod problem and the
problem of their convergence is more delicate.

Suppose that  we are given a filtered probability space
$(\Omega^n,\,{\cal F}^n,\,\{{\cal F}^n_t\},\,P^n)$ satisfying the
usual conditions  and a $d$\,-\,dimensional $\{{\cal
F}^n_t\}$\,-\,Wiener process $W^n$, $n\in\Bbb N$. Let $\{X^n\}$
denote the sequence of solutions of (\ref{eq1.2}). In the present
paper we will use the simple fact that under (\ref{eq1.3}) there
exists a sequence of solutions of the Skorokhod problem very close
to the sequence $\{X^n\}$. Observe that we can rewrite
(\ref{eq1.2}) into the form
\[\Pi(X^n_t)=Y^n_t-n\int_0^t(X^n_s-\Pi(X^n_s))ds,\quad t\in\Rp,\]
where $Y^n_t=x_0-X^n_t+\Pi(X^n_t)+\int_0^t
\sigma_n(s,X^n_s)\,dW^n_s+\int_0^t b_n(s,X^n_s)\,ds$, $t\in\Rp$,
$n\in\N$. Since $\Pi(X^n)\in\bar D$, $|K^n|$  increases only when
$\Pi(X^n)_t\in\partial D$ and
\[K^n_t=n\int_0^t\frac{\Pi(X^n_s)-X^n_s}{|\Pi(X^n_s)-X^n_s|}|\Pi(X^n_s)-X^n_s|ds=\int_0^t{\bf n}_s\,d|K^n|_s,\quad t\in\Rp,\]
it is clear that $(\Pi(X^n), K^n)$  is a solution of the Skorokhod
problem associated with $Y^n$, $ n\in\N$. One can also observe
that
\[|X^n_t-\Pi(X^n_t)|=\dist(X^n_t,\bar D),\quad t\in\Rp,\,n\in\N.\]
\begin{theorem} \label{thm1} Assume that (\ref{eq1.3}) is satisfied. \begin{enumerate}
 \item[{\bf (i)}] For every $p\geq1$,
$T>0$ there is $C>0$ such that \[||\sup_{t\leq T}\dist(X^n_t,\bar
D )|||_p\leq C(\frac{\ln n}{n})^{1/2},\quad n\in\N.\]
\item[{\bf (ii)}]
 $\{(X^n,K^n)\}_{n\in\Bbb N}$  is tight in $C({\Bbb R}^+,{\Bbb
R}^{2d})$ and its every weak limit point $(X,K)$ is a solution of
the Skorokhod problem. \end{enumerate}
\end{theorem}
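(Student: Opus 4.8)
The plan is as follows. Throughout, $\const$ denotes a finite constant, independent of $n$, that may change from line to line.

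For \textbf{(i)}, I would first establish the uniform moment bounds $\sup_n\|\sup_{t\le T}|X^n_t|\|_p<\infty$ for every $p\ge1$, $T>0$. These follow from (\ref{eq1.3}) together with the fact that the penalization term never contributes positively to $d|X^n_s-p_0|^2$ for a fixed $p_0\in\bar D$: by the property of $\Pi$ recalled above, $\langle p_0-\Pi(x),x-\Pi(x)\rangle\le0$, whence $\langle x-p_0,x-\Pi(x)\rangle\ge|x-\Pi(x)|^2\ge0$, so the usual Burkholder--Davis--Gundy and Gronwall argument goes through uniformly in $n$. Next I would write $Z^n_s=X^n_s-\Pi(X^n_s)$ (so that $|Z^n_s|=\dist(X^n_s,\bar D)$) and apply It\^o's formula to the convex, $C^{1,1}$ function $\varphi(x)=\dist^2(x,\bar D)=|x-\Pi(x)|^2$, which satisfies $\nabla\varphi(x)=2(x-\Pi(x))$ and $0\le D^2\varphi\le2I$ in the distributional sense (rigorously one mollifies $\varphi$ and passes to the limit). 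Using $\langle\nabla\varphi(X^n_s),Z^n_s\rangle=2|Z^n_s|^2$ this gives
\[
\dist^2(X^n_t,\bar D)+2n\int_0^t\dist^2(X^n_s,\bar D)\,ds=N^n_t+B^n_t+A^n_t,
\]
where $N^n_t=2\int_0^t\langle Z^n_s,\sigma_n(s,X^n_s)\,dW^n_s\rangle$ is a local martingale with $\langle N^n\rangle_t\le4\big(\sup_{s\le t}\|\sigma_n(s,X^n_s)\|^2\big)\int_0^t\dist^2(X^n_s,\bar D)\,ds$, $B^n_t=2\int_0^t\langle Z^n_s,b_n(s,X^n_s)\rangle\,ds$, and $0\le A^n_t\le\const\int_0^t\|\sigma_n(s,X^n_s)\|^2\,ds$. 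Since the left-hand side is nonnegative and the penalization appears with a favourable sign, $2n\int_0^T\dist^2(X^n_s,\bar D)\,ds\le\sup_{t\le T}|N^n_t|+\sup_{t\le T}|B^n_t|+A^n_T$. By (\ref{eq1.3}), Cauchy--Schwarz, the moment bounds and Burkholder--Davis--Gundy, for every $m\ge1$ the right-hand side is bounded in $\LL^m$ by $\const\big(1+\|\int_0^T\dist^2(X^n_s,\bar D)\,ds\|_m^{1/2}\big)$; dividing by $2n$ and solving the resulting quadratic inequality for $\|\int_0^T\dist^2(X^n_s,\bar D)\,ds\|_m$ yields, for every $m\ge1$,
\[
E\Big(\int_0^T\dist^2(X^n_s,\bar D)\,ds\Big)^m\le\const\,n^{-m},\qquad n\in\N.
\]

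The remaining step --- upgrading this $\LL^m$ time-average bound to the uniform bound claimed in (i) --- is the one I expect to be the main obstacle, and it is where the $\LL^p$-modulus of continuity estimates for It\^o processes of Fischer and Nappo \cite{fn} enter. With $\xi^n_t=x_0+\int_0^t\sigma_n(s,X^n_s)\,dW^n_s+\int_0^t b_n(s,X^n_s)\,ds$ one has $X^n_t=\xi^n_t-n\int_0^t Z^n_s\,ds$; partitioning $[0,T]$ into subintervals $I_k$ of length $n^{-1}$ with left endpoints $r_k$, for $t\in I_k$
\[
\dist(X^n_t,\bar D)\le\dist(X^n_{r_k},\bar D)+|X^n_t-X^n_{r_k}|\le\dist(X^n_{r_k},\bar D)+\omega_{1/n}(\xi^n,T)+n\int_{I_k}|Z^n_s|\,ds,
\]
and $n\int_{I_k}|Z^n_s|\,ds\le n^{1/2}\big(\int_{I_k}\dist^2(X^n_s,\bar D)\,ds\big)^{1/2}$. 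Using the $\LL^m$ estimate above (with $m$ of order $\ln n$, to take the maximum over the $\sim nT$ blocks) the last term is ${\cal O}(n^{-1/2})$ in $\LL^p$, while $\|\omega_{1/n}(\xi^n,T)\|_p\le\const((\ln n)/n)^{1/2}$ is the estimate of \cite{fn}, applicable here because of (\ref{eq1.3}) and the moment bounds. The delicate term is $\max_k\dist(X^n_{r_k},\bar D)$: I would control it by running the It\^o identity above on each $I_k$ --- on such an interval the penalization, acting at rate $n$, keeps $\dist(X^n_\cdot,\bar D)$ at the scale $n^{-1/2}$, fed by the diffusion --- and again taking the maximum over $\sim nT$ blocks, the factor $\ln n$ being exactly the price of that maximum. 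This should give $\|\sup_{t\le T}\dist(X^n_t,\bar D)\|_p\le\const((\ln n)/n)^{1/2}$.

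For \textbf{(ii)}, note first that by (i) $Z^n\to0$ uniformly on compacts in $\LL^p$, hence in probability. As observed before the theorem, $(\Pi(X^n),K^n)$ solves the Skorokhod problem associated with $Y^n=\xi^n-Z^n$, with $K^n=\Pi(X^n)-Y^n$. From (\ref{eq1.3}), the moment bounds and Burkholder--Davis--Gundy one gets $E|\xi^n_t-\xi^n_s|^4\le\const|t-s|^2$ uniformly in $n$, so $\{\xi^n\}$ is tight in $C(\Rp,\R^d)$; since $Z^n\to0$ uniformly, $\{Y^n\}$ is tight in $C(\Rp,\R^d)$, and by the continuity of the solution map of the Skorokhod problem on convex domains and the accompanying estimates (see e.g. \cite{s2,ta}), so is $\{\Pi(X^n)\}$. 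Hence $\{X^n\}=\{\Pi(X^n)+Z^n\}$ and $\{K^n\}=\{\Pi(X^n)-Y^n\}$, and therefore $\{(X^n,K^n)\}$, are tight in $C(\Rp,\R^{2d})$. Arguing as for the moment bounds, one also obtains $\sup_n E|K^n|_T^{1+\varepsilon}<\infty$ for some $\varepsilon>0$, where $|K^n|$ denotes the variation of $K^n$.

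Finally, let $(X,K)$ be a weak limit point; passing to a subsequence and, by the Skorokhod representation theorem, to a common probability space, I may assume $(X^n,K^n,W^n)\to(X,K,W)$ a.s., uniformly on compacts, so that $\Pi(X^n)=X^n-Z^n\to X$ and $Y^n\to Y:=X-K$ a.s., uniformly on compacts. By (i), $\sup_{t\le T}\dist(X^n_t,\bar D)\to0$, so $X$ is $\bar D$-valued; by lower semicontinuity of the variation together with $\sup_n E|K^n|_T^{1+\varepsilon}<\infty$, $K$ has locally bounded variation and $K_0=0$. For every $z\in\bar D$ and $0\le s\le t$, the inequality $\langle\Pi(x)-z,\Pi(x)-x\rangle\le0$ gives
\[
\int_s^t\langle\Pi(X^n_r)-z,dK^n_r\rangle=n\int_s^t\langle\Pi(X^n_r)-z,\Pi(X^n_r)-X^n_r\rangle\,dr\le0,
\]
and letting $n\to\infty$ (using the uniform convergence together with the uniform integrability of $|K^n|_T$) yields $\int_s^t\langle X_r-z,dK_r\rangle\le0$ for all $z\in\bar D$, $0\le s\le t$. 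Combined with $X=Y+K$, $X$ being $\bar D$-valued, $K$ of locally bounded variation and $K_0=0$, this is exactly the characterization of a solution of the Skorokhod problem associated with $Y$ on the convex domain $D$ (cf. \cite{ce,lau}); hence every weak limit point $(X,K)$ solves the Skorokhod problem.
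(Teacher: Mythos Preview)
Your treatment of part (ii) is sound and close in spirit to the paper's, though routed differently: you argue tightness via the Kolmogorov criterion and then pass to the limit directly through the variational characterisation of the Skorokhod problem, whereas the paper invokes the Aldous criterion for $\{\bar Y^n\}$ together with a stability result for the Skorokhod map (\cite[Corollary A3]{s4}). Both work; yours is more self-contained, the paper's more economical.

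For part (i), however, your approach is genuinely different from the paper's and, as outlined, does not close. The paper does \emph{not} use It\^o's formula on $\dist^2(\cdot,\bar D)$. Instead, on each block $[k/n,(k+1)/n]$ it compares $X^n$ with the deterministic penalised flow $\bar X^n_s=X^n_{k/n}-n\int_0^s(\bar X^n_u-\Pi(\bar X^n_u))\,du$, which has the explicit solution $\bar X^n_s=\Pi(X^n_{k/n})+(X^n_{k/n}-\Pi(X^n_{k/n}))e^{-ns}$ and hence contracts the distance by $e^{-1}$ over one block. A Gronwall estimate on the difference (using that $\Pi$ is $1$-Lipschitz) gives $|X^n_{(k+1)/n}-\bar X^n_{1/n}|\le e^2\,\omega_{1/n}(\bar Y^n,T)$, and together these yield the \emph{pathwise} recursion
\[
A^n_{k+1}\le e^{-1}A^n_k+e^2\,\omega_{1/n}(\bar Y^n,T),\qquad A^n_k:=\dist(X^n_{k/n},\bar D),
\]
which closes to $\max_kA^n_k\le\dfrac{e^2}{1-e^{-1}}\,\omega_{1/n}(\bar Y^n,T)$. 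One more Gronwall step inside each block controls the supremum there, and a \emph{single} application of Fischer--Nappo to $\|\omega_{1/n}(\bar Y^n,T)\|_p$ finishes the proof.

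Your It\^o identity gives the time-average bound $\|\int_0^T\dist^2(X^n_s,\bar D)\,ds\|_m=\mathcal O(n^{-1})$, but the upgrade to a uniform bound has two gaps. First, the claim that $n^{1/2}\big(\int_{I_k}\dist^2\big)^{1/2}=\mathcal O(n^{-1/2})$ in $\LL^p$ would require $\int_{I_k}\dist^2=\mathcal O(n^{-2})$ per block, whereas the global estimate only yields $\int_0^T\dist^2=\mathcal O(n^{-1})$, i.e.\ $n^{1/2}\big(\int_0^T\dist^2\big)^{1/2}=\mathcal O(1)$; taking $m\sim\ln n$ does not help here because you have no individual-block bound to feed into the maximum. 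Second, ``running the It\^o identity on each $I_k$'' to control $\max_k\dist(X^n_{r_k},\bar D)$ is circular as stated: the identity on $I_k$ starts from $\dist^2(X^n_{r_k})$ with no contraction factor visible. One can salvage the approach by writing the It\^o identity in variation-of-constants form, $\dist^2(X^n_t)=e^{-2n(t-r_k)}\dist^2(X^n_{r_k})+\int_{r_k}^te^{-2n(t-s)}\,d(N^n+B^n+A^n)_s$, which does exhibit the factor $e^{-2}$ at the grid; but then you are essentially reproducing the paper's ODE comparison through a heavier mechanism (and you must handle the mollification of the merely $C^{1,1}$ function $\dist^2$). The paper's route is both simpler and sharper: it produces a bound by the single random variable $\omega_{1/n}(\bar Y^n,T)$, with no high-moment or block-maximum machinery required.
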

\begin{proof} (i) Fix $T>0$. First observe that by \cite[Corollary 2.4]{laus} and
Gronwall's lemma,
\begin{equation}\label{eq2.1}
\sup_nE\sup_{t\leq T}|X^n_t|^p<+\infty
\end{equation}
for every $p\geq 1$. By the above  and estimates from Fischer and
Nappo \cite[Theorem 1]{fn} (see also \cite[Lemma 4.4]{p1} and
\cite[Lemma A4]{s4}) for every $p\geq1$
 there is  $C>0$ such that
\begin{equation}\label{eq2.2}
||\omega_{1/n}(\bar Y^n,T)||_p\leq C(\frac{\ln n}{n})^{1/2},\quad
n\in\N,\end{equation} where $\bar Y^n_t=x_0+\int_0^t
\sigma_n(s,X^n_s)\,dW^n_s+\int_0^t b_n(s,X^n_s)\,ds$, $t\in\Rp$,
$n\in\N$.

Fix $n\in N$  and $k=0,1,....,[nT]-1$. Clearly,  $X^n$  is a
solution of the equation
\begin{equation}\label{eq2.3} X^n_{k/n+s}=X^n_{k/n}+\bar Y^n_{k/n+s}-\bar
Y^n_{k/n}-n\int_0^s(X^n_{k/n+u}-\Pi(X^n_{k/n+u})du,\quad
s\in[0,1/n]\end{equation} on the interval $[k/n, (k+1)/n]$. It is
also clear that there exists a unique solution of the equation
\begin{equation}\label{eq2.4} \bar X^n_s=X^n_{k/n}-n\int_0^s(\bar
X^n_u-\Pi(\bar X^n_u))du,\quad s\in[0,1/n].\end{equation} One can
easily check  that $\bar
X^n_s=\Pi(X^n_{k/n})+(X^n_{k/n}-\Pi(X^n_{k/n})e^{-ns}$,
$s\in[0,1/n]$, which implies that
\begin{equation}\label{eq2.5}|\bar X^n_{1/n}-\Pi(\bar
X^n_{1/n})|\leq|\bar
X^n_{1/n}-\Pi(X^n_{k/n})|=|X^n_{k/n}-\Pi(X^n_{k/n})|e^{-1}.\end{equation}
Subtracting (\ref{eq2.4}) from (\ref{eq2.3})  we see that
\[X^n_{k/n+s}-\bar X^n_s=\bar Y^n_{k/n+s}-\bar Y^n_{k/n}-n\int_0^s(X^n_{k/n+u}-\bar X^n_u-\Pi(X^n_{k/n+u}+\Pi(\bar
X^n_n))du,\quad s\in[0,1/n], \] hence that
\[|X^n_{k/n+s}-\bar X^n_s|\leq \omega_{1/n}(\bar Y^n,T)+2n\int_0^s|X^n_{k/n+u}-\bar X^n_u|du, \quad
s\in[0,1/n],\] because $\Pi:\Rd\to\bar D$ is Lipschitz continuous
with the constant equal to $1$. Applying  Gronwall's lemma  we
conclude from the above that
\begin{equation}\label{eq2.6}|X^n_{(k+1)/n}-\bar X^n_{1/n}|\leq e^2\omega_{1/n}(\bar
Y^n,T).\end{equation} Setting $A^n_k=|X^n_{k/n}-\Pi(X^n_{k/n}|$
and using  (\ref{eq2.5}), (\ref{eq2.6}) we have
\begin{eqnarray*}
A^n_{k+1}&\leq&|X^n_{(k+1)/n}-\Pi(\bar X^n_{1/n}|\leq
|X^n_{(k+1)/n}-\bar X^n_{1/n}|+|\bar X^n_{1/n}-\Pi(\bar
X^n_{1/n})|
\\
&\leq&e^2\omega_{1/n}(\bar Y^n,T)+e^{-1}A^n_k.\end{eqnarray*}
Since $A^n_0=0$  and $A^n_1\leq e^2\omega_{1/n}(\bar Y^n,T)$, by
induction on  $k$ we obtain
\begin{equation}\label{eq2.7}
\max_{0\leq
k\leq[nT]}|X^n_{k/n}-\Pi(X^n_{k/n})|\leq\frac{e^2}{1-e^{-1}}\omega_{1/n}(\bar
Y^n,T).\end{equation} Furthermore, for $k=0,1,....,[nT]$ and
$s\in[0,1/n]$ such that $k/n+s\leq T$,
\begin{eqnarray*}
X^n_{k/n+s}-X^n_{k/n}&=&\bar Y^n_{k/n+s}-\bar
Y^n_{k/n}\\
&&-n\int_0^s((X^n_{k/n+u}-X^n_{k/n})-(\Pi(X^n_{k/n+u})-\Pi(X^n_{k/n}))du\\
&&-ns(X^n_{k/n}-\Pi(X^n_{k/n}).\end{eqnarray*} Hence, by
Gronwall's lemma,
\[\sup_{s\in[0,1/n]}|X^n_{(k/n+s)\wedge T}-X^n_{k/n}|\leq
e^2(\omega_{1/n}(\bar Y^n,T)+|X^n_{k/n}-\Pi(X^n_{k/n})|),\] which
when combined with (\ref{eq2.7})  gives
\begin{equation}\label{eq2.8}\max_{0\leq
k\leq[nT]}\sup_{s\in[0,1/n]}|X^n_{(k/n+s)\wedge T}-X^n_{k/n}|\leq
C \omega_{1/n}(\bar Y^n,T)
\end{equation}
for some $C>0$. Of course (\ref{eq2.7}), (\ref{eq2.8}) and
(\ref{eq2.2}) imply (i).

(ii)  By (\ref{eq1.3}), (\ref{eq2.1}) and the well known Aldous
criterion (see e.g. \cite{al}),
 \[\{\bar Y^n\}\quad\mbox{\rm is
tight in}\,\,C({\Bbb R}^+,{\Bbb R}^{d}).\] Moreover,  $\{\bar
Y^n\}$ satisfies the so called UT condition (see e.g.
\cite{s2,s4}) and hence  its every weak limit point is a
semimartingale. Due to  part (i), the sequence $\{ Y^n\}$ is also
tight in $C({\Bbb R}^+,{\Bbb R}^{d})$. Assume that $Y^{(n)}\arrowd
\bar Y$ in $C({\Bbb R}^+,{\Bbb R}^{d})$ along some subsequence. By
\cite[Corollary A3]{s4},
 \[(X^{(n)},K^{(n)})\arrowd (X,K)\quad\mbox{\rm in}\,\,C({\Bbb R}^+,{\Bbb
R}^{2d}),\] where $(\bar X,\bar K)$ is a unique solution of the
Skorkhod problem associated with a semimartingale  $\bar Y$.
\end{proof}
\begin{remark}{\rm
Under (\ref{eq1.3}),
\begin{equation}\label{eq2.9}
\sup_nE|K^n|_T^p<+\infty
\end{equation}
for every $p\geq 1$, $T>0$. This follows from  (\ref{eq2.1}) and
\cite[Theorem
2.5]{laus}. %Consequently, by Fatou's lemma every limit point $(X,K)$ of the
%sequence ${(X^n,K^n)}$ has properties that  \begin{equation}
%\label{eq2.8} E\sup_{t\leq T} |X_t|^p<+\infty\quad\mbox{\rm
%and}\quad E|K|_T^p<+\infty,\quad p\geq1,\,T>0.\end{equation}
}\end{remark}

\nsubsection{Approximations of weak and strong solutions} We  say
that the SDE (\ref{eq1.1}) has a strong solution  if there exists
a pair $(X,\,K)$  of $\{{\cal F}_t\}$\,-\,adapted processes
satisfying (\ref{eq1.1}) and such that $(X,\,K)$ is a solution of
the Skorokhod problem associated with
\begin{equation}\label{eq3.1}
 Y_t=x_0 + \int_0^t \sigma(s,X_s)\,dW_s+\int_0^t
b(s,X_s)\,ds,\quad t\in\Rp.\end{equation} Recall also that the SDE
(\ref{eq1.1}) is said to have a weak solution if  there exists a
probability space $(\bar\Omega,\,\bar{\cal F},\,\bar P)$, an
$\{\bar{\cal F}_t\}$\,-\,adapted Wiener process  $\bar W$ and a
pair of $\{\bar{\cal F}_t\}$\,-\,adapted processes $(\bar X,\bar
K)$ saisfying (\ref{eq1.1}) with $\bar W$ instead of $W$.

 The following
set of general conditions  was introduced in Rozkosz and S\l
omi\'nski \cite{rs1}.
  We
 say that condition  (H) is satisfied if for some closed
subsets $H,\,H_1$ of ${\Bbb R}^+\times\Rd$ such that $H_1\subset
H$,
\begin{itemize}
\item $\forall_{\varepsilon>0}\, \{(\det \sigma_n\sigma^*_n)^{-1}\}_{n\in\Bbb
N}$ is  uniformly integrable on each bounded subset of
$H^c(\varepsilon)$,
\item $\sigma_n\rightarrow
\sigma,\,b_n\rightarrow b$ a.e. on $H^c={\Bbb
R}^+\times\Rd\setminus H$,
\item for every $(t,x)\in
H_1$ (for every $(t,x)\in H$),
\[\sigma_n(t,x_n)\rightarrow
\sigma(t,x),\,\, b_n(t,x_n)\rightarrow b(t,x)\]
 for all $\{x_n\}$
such that $x_n\rightarrow x$ (for all $\{(t,x_n)\}\subset H$ such
that $x_n\rightarrow x$).
\end{itemize}
Here $H^c(\varepsilon)={\Bbb R}^+\times\Rd\setminus
H(\varepsilon)$ and $H(\varepsilon)=H\cup H_{1,\varepsilon}$,
where $H_{1,\varepsilon}=\emptyset$ if $H_1=\emptyset$ and $
H_{1,\varepsilon}=\{z\in{\Bbb R}^+\times\Rd:\inf_{y\in H_1}|z-y|
\le\varepsilon\}$, otherwise.

\begin{theorem} Assume that (\ref{eq1.3}) and (H) are satisfied.
\label{thm2} \begin{enumerate} \item[{\bf(i)}] If the  SDE
(\ref{eq1.1}) has a unique weak solution $X$ then \[X^n\arrowd
X\quad in\,\,C({\Bbb R}^+,{\Bbb R}^d).\]
\item[{\bf (ii)}] If $W^n\arrowp W$ in $C({\Bbb R}^+,{\Bbb R}^d)$ and the  SDE (\ref{eq1.1})
is pathwise unique   then
\[X^n\arrowp X\quad in\,\,C({\Bbb R}^+,{\Bbb R}^d),\]
where $X$ is a unique strong solution of
(\ref{eq1.1}).\end{enumerate}
\end{theorem}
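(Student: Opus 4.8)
The plan is to combine the reduction to the Skorokhod problem carried out in Section 2 with the stability theory for reflected SDEs of Rozkosz and S\l omi\'nski \cite{rs1}. First I would set up tightness and extract a limit. By Theorem \ref{thm1}(ii) the sequence $\{(X^n,K^n)\}$ is tight in $C(\Rp,\R^{2d})$; adjoining the Wiener processes $W^n$ (which are automatically tight) and using Theorem \ref{thm1}(i), which gives $\sup_{t\le T}|X^n_t-\Pi(X^n_t)|\to 0$ in $\LL^p$, the family $\{(X^n,\Pi(X^n),K^n,W^n)\}$ is tight in $C(\Rp,\R^{4d})$. Fix a subsequence along which it converges in law and, by the Skorokhod representation theorem, realize the convergence a.s.\ uniformly on compacts on a single probability space. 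The limit has the form $(X,X,K,W)$ (the first two coordinates coincide since their difference tends to $0$), $X$ is $\bar D$-valued, $W$ is a Wiener process for the filtration generated by $(X,K,W)$ (the martingale property survives the limit thanks to the UT property of $\{\bar Y^n\}$ noted in the proof of Theorem \ref{thm1}), and by Theorem \ref{thm1}(ii) the pair $(X,K)$ solves the Skorokhod problem associated with the a.s.\ limit $Y$ of $Y^n$, equivalently of $\bar Y^n$ since $Y^n-\bar Y^n=\Pi(X^n)-X^n\to0$.

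The next step, which is the heart of the matter, is to identify the driving process, i.e.\ to show
$$Y_t=x_0+\int_0^t\sigma(s,X_s)\,dW_s+\int_0^t b(s,X_s)\,ds,\qquad t\in\Rp,$$
so that $(X,K)$ is a weak solution of (\ref{eq1.1}). This amounts to passing to the limit in $\bar Y^n_t=x_0+\int_0^t\sigma_n(s,X^n_s)\,dW^n_s+\int_0^t b_n(s,X^n_s)\,ds$, and it is precisely here that condition (H) is used, exactly as in \cite{rs1}. The drift term is handled via the a.e.\ convergence $b_n\to b$ on $H^c$, the uniform (sequential) continuity of $b_n$ along $H$ and $H_1$, the moment bound (\ref{eq2.1}), and a Krylov-type occupation time estimate for $\{X^n\}$; this estimate is available because $X^n$ is an It\^o process whose finite variation part $K^n+\int_0^\cdot b_n(s,X^n_s)\,ds$ has $\LL^p$-bounded total variation on $[0,T]$ (by (\ref{eq2.9}) and (\ref{eq1.3})) and whose diffusion matrix is non-degenerate on $H^c(\varepsilon)$ in the uniformly integrable sense prescribed by (H), so that Lebesgue-null sets — in particular the set where $\sigma_n,b_n$ fail to converge — are not charged. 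The stochastic integral is then treated by the same estimates together with the limit theorem for It\^o integrals with respect to semimartingales satisfying the UT condition (see \cite{s2,s4}). Since, by Theorem \ref{thm1}(i), $(\Pi(X^n),K^n)$ differs from a genuine solution of the reflected SDE with coefficients $\sigma_n,b_n$ only through the perturbation $Y^n-\bar Y^n=\Pi(X^n)-X^n\to0$ in $\LL^p$, the argument of \cite{rs1} applies with no essential change and yields the identity above. Uniqueness in law of the solution of (\ref{eq1.1}) then forces the full sequence to converge: every subsequence of $\{(X^n,\Pi(X^n))\}$ admits a further subsequence converging in law to $(X,X)$, whence $X^n\arrowd X$ in $C(\Rp,\R^d)$, proving (i).

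For (ii) I would run the standard Gy\"ongy--Krylov argument, using that under pathwise uniqueness and existence of a weak solution (supplied by (i)) the equation (\ref{eq1.1}) has a unique strong solution $X$ by the Yamada--Watanabe principle. Fix $n,m$ and consider the joint law of $(X^n,X^m,W^n,W^m)$; by the estimates above this family is tight, and since $W^n\arrowp W$ and $W^m\arrowp W$ every limit point has the form $(X',X'',W,W)$ where, by the identification step, both $X'$ and $X''$ are weak solutions of (\ref{eq1.1}) relative to the \emph{same} Wiener process $W$. Pathwise uniqueness gives $X'=X''$ a.s., so the laws of $(X^n,X^m)$ concentrate asymptotically on the diagonal of $C(\Rp,\R^d)\times C(\Rp,\R^d)$; hence $\{X^n\}$ is Cauchy in probability and its limit is the strong solution $X$, i.e.\ $X^n\arrowp X$ in $C(\Rp,\R^d)$.

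The principal obstacle is the identification step: passing to the limit in $\int_0^\cdot\sigma_n(s,X^n_s)\,dW^n_s$ when the $\sigma_n$ are merely measurable and converge to $\sigma$ only almost everywhere. Everything rests on the Krylov-type occupation estimates for $\{X^n\}$, which depend on the non-degeneracy built into (H) and on the uniform $\LL^p$ bound (\ref{eq2.9}) for $|K^n|_T$ furnished by the penalization structure; once these are in hand, the convergence is a fairly routine application of the UT/semimartingale limit theory in conjunction with \cite{rs1}.
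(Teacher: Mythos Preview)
Your proposal is correct and follows essentially the same route as the paper: tightness from Theorem \ref{thm1}, identification of the limiting driving process $\bar Y$ via Krylov's estimate (the paper states it explicitly as (\ref{eq3.2}), noting it survives because $K^n$ has bounded variation with $\LL^p$ control (\ref{eq2.9})) combined with the arguments of \cite[Theorem 2.2]{rs1}, then the observation $Y^n-\bar Y^n=\Pi(X^n)-X^n\to0$ to conclude the limit solves the Skorokhod problem for $\bar Y$; part (ii) is the Gy\"ongy--Krylov diagonal argument in both versions. The only cosmetic difference is that you invoke the Skorokhod representation theorem explicitly whereas the paper works directly with convergence in law, but this is immaterial.
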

\begin{proof} We  use notations  from the proof of  Theorem
\ref{thm1}.

(i) Our method of proof will be adaptation of  the proof of
\cite[Theorem 2.2]{rs1}. Since  $K^n$ is a bounded variation
process,   one can observe that Krylov's inequality used in
\cite[Theorem 5.1]{rs1} is still in force, i.e.
 there exists a constant $C$ depending only on $d$, $R$ and $t$
 such that for every
non-negative measurable $f:\Rp\times\Rd\to\Rp$,
\begin{equation}\label{eq3.2}
E\int_0^{t\wedge\tau_n^R}f(s,X^n_s)ds \leq C||(\det
\sigma_n\sigma^*_n)^{-1/(d+1)}f||_{{\Bbb L}_{d+1}([0,t]\times
B(0,R))},\end{equation}
 where
$\tau_n^{R}=\inf\{t:|X_t|\vee|K^n|_t>R\}$,
$B(0,R)=\{y\in\Rd:\,|y|<R\}$. By Theorem \ref{thm1}(ii),
$\{(X^n,W^n)\}$ is tight in $C({\Bbb R}^+,{\Bbb R}^{2d})$ and  we
may assume  that $(X^{(n)},W^{(n)})\arrowd(\bar X,\bar W)$ in
$C({\Bbb R}^+,{\Bbb R}^{2d})$ along some subsequence, where  $\bar
W$  is a  Wiener process with respect to the natural filtration
${\cal F}^{\bar X,\bar W}$. By (\ref{eq3.2}) and arguments from
the proof of \cite[Theorem 2.2]{rs1},
\[(X^{(n)},\bar Y^{(n)},W^{(n)})\arrowd(\bar X, \bar Y,\bar W)\quad\mbox{\rm
in}\,\,C({\Bbb R}^+,{\Bbb R}^{3d}),\] where $\bar
Y_t=x_0+\int_0^t\sigma(s,\bar X_s)d\bar W_s+\int_0^tb(s,\bar
X_s)ds$, $t\in\Rp$. Since $Y^n=\bar Y^n-X^n+\Pi(X^n)$, it  follows
from  Theorem \ref{thm1}(ii) %and \cite[Corollary A3]{s4}
  that
\[(X^{(n)},K^{(n)},\bar Y^{(n)})\arrowd(\bar X, \bar Y,\bar K)\quad\mbox{\rm
in}\,\,C({\Bbb R}^+,{\Bbb R}^{3d}),\]
 where $(\bar X,\bar K)$ is a solution of the  Skorokhod problem associated with $\bar Y$.
 Hence $(\bar X,\bar K)$ is a weak solution of (\ref{eq1.1}) and the result follows due to weak uniqness of (\ref{eq1.1}).

 (ii) By using arguments from   Gy\"ongy
and Krylov \cite{gk}, to prove that $\{X^n\}$ converges in
probability it is sufficient to show that from any subsequences
$(l)\subset(n), (m)\subset(n) $ it is possible to choose further
subsequences $(l_k)\subset(l),(m_k)\subset(m)$ such that $(
X^{(l_k)}, X^{(m_k)})\arrowd(\bar X,\bar X)$ in $ C({\Bbb
R}^+,{\Bbb R}^{2d}) $, where $\bar X$ is a process with continous
trajectories. From Theorem \ref{thm1}(ii)  we deduce that
\[\{(
X^{(l)},\bar X^{(m)},W^{(l)},W^{(m)})\}\quad\mbox{\rm is tight
in}\,\, C({\Bbb R}^+,{\Bbb R}^{d}).\]
 Therefore, we can choose
subsequences $(l_k)\subset(l),(m_k)\subset(m)$ such that
\[
(X^{(l_k)}, X^{(m_k)},W^{(l_k)},W^{(m_k)})\arrowd (\bar X',\bar
X'',\bar W,\bar W),\quad \mbox{\rm  in}\,\, C({\Bbb R}^+,{\Bbb
R}^{4d}),
\]
 where $\bar X',\bar X''$  are processes with continuous trajectories
and $\bar W$  is a  Wiener process with respect to the natural
filtration ${\cal F}^{\bar X',\bar X'',\bar W}$. In view of  part
(i), the processes $\bar X',\bar X''$  are  solutions of
(\ref{eq1.1}) with $\bar W$ in place of $W$. Since (\ref{eq1.1})
is pathwise unique,  $X'=X''$, and consequently $\{X^n\}$
converges in probability in  $C({\Bbb R}^+,{\Bbb R}^{4d})$ to some
continuous process  $X$. Hence $(X^n,W^n)\arrowp (X,W)$, so using
once again the pathwise uniqueness property of (\ref{eq1.1}) shows
that $X$ is a unique strong solution of (\ref{eq1.1}).
\end{proof}
\begin{remark}{\rm From \cite{rs1} it follows that in fact in  part (i) of the above theorem
the assumption that $\sigma_n\rightarrow \sigma$ a.e. on $H^c$ may
be replaced by a weaker  assumption that
$\sigma_n\sigma^*_n\rightarrow \sigma\sigma^*$  a.e. on $H^c$.}
\end{remark}

\begin{remark}{\rm  There are important examples of
equations of the form (\ref{eq1.1}) with discontinuous
coefficients having unique weak or strong solutions. For instance,
in Schmidt \cite{sz} it is shown that if $d=1$, $D=(r_1,r_2)$,
$b\equiv0$ and $\sigma$ is purely function of $x$, then
(\ref{eq1.1}) has a unique weak solution for every starting point
$x_0\in\bar D$ if and only if the set $M$ of all $x\in\bar D$ such
that $\int_{\bar D\cap U_x}\sigma^{-2}(y)\,dy=+\infty$ for every
open neighborhood $U_x$ of $x$ is equal to the set $N$ of zeros of
$\sigma$. In multidimensional case it is known, that a solution of
(\ref{eq1.1}) is unique in law if (\ref{eq1.3}) is satisfied with
$\sigma_n,b_n$ replaced by $\sigma,b$, the coefficient
$\sigma\sigma^*$ is bounded, continuous and uniformly elliptic,
and $\partial D$ is regular (see Stroock and Varadhan \cite{sw}
for more details). Recently  Semrau \cite{se} considered the
classical case $d=1$, $D=\Rp$ with coefficients $\sigma,b$
depending only on $x$. She has shown  that if $\sigma,b$, satisfy
(\ref{eq1.3}), $\sigma$ is uniformly positive and
$(\sigma(x)-\sigma(y))^2\leq|f(x)-f(y)|$, $x,y\in\Rp$ for some
bounded increasing  function $f:\Rp\to\R$ then there exists a
unique strong solution of (\ref{eq1.1}). Some weaker results on
pathwise uniqueness  can be found in the earlier paper by Zhang
\cite{zh}.

Since in condition (H) we do not require continuity of the limit
coefficients $\sigma$  and $ b$, Theorem \ref{thm2} is a useful
tool for practical approximations of solutions of the equations
mentioned above. }\end{remark}

\nsubsection{Rate of convergence in the case of Lipschitz
continuous coefficients} In this section we  assume that
$\sigma_n=\sigma$, $b_n=b$, $n\in\N$,  where $\sigma, b$ are
Lipschitz continuous functions with respect  to $x$, i.e. satisfy
(\ref{eq1.4}). We also assume  that all SDEs with penalization
term are driven by a fixed $\{{\cal F}_t\}$\,-\,Wiener process
$W$. In particular,  this means that $X^n$ is a solution of the
equation
\begin{equation}\label{eq4.1}
 X^n_t = x_0 + \int_0^t \sigma(s,X^n_s)\,dW_s+\int_0^t
b(s,X^n_s)\,ds -n\int_0^t(X^n_s-\Pi(X^n_s))ds,\quad t\in{\Bbb
R}^+.
\end{equation}
Tanaka \cite{ta} has shown that in the case of Lipschitz continous
coefficients there exists a unique strong solution $(X,K)$ of
(\ref{eq1.1}).  Moreover, from \cite[Theorem 2.2]{s4} and
Gronwall's lemma it follows that
\begin{equation}
\label{eq4.2} E\sup_{t\leq T} |X_t|^p<+\infty\quad\mbox{\rm
and}\quad E|K|_T^p<+\infty
\end{equation}
for every $p\geq1$, $T>0$.
\begin{theorem}\label{thm3} Assume
that (\ref{eq1.3}) and (\ref{eq1.4}) are satisfied. Let $X^n$
satisfy (\ref{eq4.1}), $n\in\N$. For every  $p\in \N$, $T>0$ there
is $C>0$ such that
 \begin{description}
\item[(i)] if  $D$  is a convex polyhedron then
\[
||\sup_{t\leq T}| X^n_t-X_t|||_p\leq C\big(\frac{\ln
n}n\big)^{1/2},\quad n\in\N,
\]
\item[(ii)] if  $D$  is a general convex domain then
\[
||\sup_{t\leq T}| X^n_t-X_t|||_p\leq C\big(\frac{\ln
n}n\big)^{1/4},\quad
 n\in\N,
\]
\end{description}
where  $X$  is a  unique strong solution of
(\ref{eq1.1})\end{theorem}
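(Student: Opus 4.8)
The plan is to compare $X^n$ directly against $X$ by exploiting that, on each mesh interval $[k/n,(k+1)/n]$, the penalization term in (\ref{eq4.1}) relaxes $X^n$ exponentially fast toward $\bar D$, exactly as in the proof of Theorem \ref{thm1}(i). First I would use the Lipschitz assumption (\ref{eq1.4}), together with (\ref{eq2.1}) and (\ref{eq4.2}), to upgrade the modulus estimate (\ref{eq2.2}): since $\sigma_n=\sigma$, $b_n=b$ are Lipschitz and $X^n$, $X$ have uniformly bounded $\LL^p$ moments, the $\LL^p$-modulus of continuity of $\bar Y^n$ (and of $Y$) is $\mathcal O((\ln n/n)^{1/2})$ for every $p$. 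The key inputs are then: (a) the near-reflection estimate $\max_{k\le[nT]}|X^n_{k/n}-\Pi(X^n_{k/n})|\le C\,\omega_{1/n}(\bar Y^n,T)$ from (\ref{eq2.7})–(\ref{eq2.8}), which already gives that $\Pi(X^n)$ is $\mathcal O((\ln n/n)^{1/2})$-close to $X^n$; and (b) a stability estimate for the Skorokhod map. Writing $\tilde X^n=\Pi(X^n)$, $\tilde K^n=K^n$, the pair $(\tilde X^n,\tilde K^n)$ solves the Skorokhod problem associated with $Y^n=\bar Y^n-(X^n-\Pi(X^n))$, while $(X,K)$ solves it for $Y$ in (\ref{eq3.1}). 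So it suffices to bound $\|\sup_{t\le T}|\tilde X^n_t-X_t|\|_p$ and add the $\mathcal O((\ln n/n)^{1/2})$ term $\|\sup_{t\le T}\dist(X^n_t,\bar D)\|_p$ from Theorem \ref{thm1}(i).

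For the polyhedral case (i), I would invoke the Lipschitz property of the Skorokhod map on convex polyhedra (as in Dupuis–Ishii / the references \cite{rs1,s4,ta}): there is a constant $L_D$ with $\sup_{t\le T}|\tilde X^n_t-X_t|\le L_D\sup_{t\le T}|Y^n_t-Y_t|$ pathwise. Now $Y^n_t-Y_t=\int_0^t(\sigma(s,X^n_s)-\sigma(s,X_s))dW_s+\int_0^t(b(s,X^n_s)-b(s,X_s))ds-(X^n_t-\Pi(X^n_t))$. The last term is $\mathcal O((\ln n/n)^{1/2})$ in $\LL^p$ by Theorem \ref{thm1}(i). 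For the stochastic and drift integrals, apply Burkholder–Davis–Gundy and Hölder, bound the integrand by $L^{1/2}|X^n_s-X_s|\le L^{1/2}(|X^n_s-\Pi(X^n_s)|+|\Pi(X^n_s)-X_s|)$, and close the loop with Gronwall's lemma applied to $\varphi(t)=E\sup_{s\le t}|\tilde X^n_s-X_s|^p$. This yields $\|\sup_{t\le T}|\tilde X^n_t-X_t|\|_p\le C(\ln n/n)^{1/2}$, hence (i).

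For the general convex case (ii), the Skorokhod map is no longer Lipschitz but only $1/2$-Hölder in an appropriate sense (this is the classical obstruction, going back to Tanaka and exploited in \cite{ta,s4}): one has an estimate of the form $\sup_{t\le T}|\tilde X^n_t-X_t|^2\le C\big(\sup_{t\le T}|Y^n_t-Y_t|^2 + \sup_{t\le T}|Y^n_t-Y_t|\cdot(|K^n|_T+|K|_T)\big)$, obtained by expanding $|\tilde X^n_t-X_t|^2$ using the definition of the Skorokhod problem and the inequality $\langle y-x,{\bf n}\rangle\ge0$ for $y\in\bar D$, ${\bf n}\in\NN_x$. Taking $\LL^{p}$ norms, using (\ref{eq2.9}), (\ref{eq4.2}) to control the variations, and the Cauchy–Schwarz inequality, the cross term produces the square-root loss: $\|\sup_{t\le T}|\tilde X^n_t-X_t|\|_p\le C\|\sup_{t\le T}|Y^n_t-Y_t|\|_{2p}^{1/2}+\dots$. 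Since $\|\sup_{t\le T}|Y^n_t-Y_t|\|_{2p}$ is itself $\mathcal O((\ln n/n)^{1/2})$ — again via BDG, Hölder, Theorem \ref{thm1}(i) and a Gronwall argument on the quantity $E\sup_{s\le t}|\tilde X^n_s-X_s|^{2p}$ — one gets the rate $\mathcal O((\ln n/n)^{1/4})$, giving (ii).

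I expect the main obstacle to be making the Gronwall argument in the general case fully rigorous: the cross term $\sup|Y^n-Y|\cdot(|K^n|_T+|K|_T)$ couples the unknown $\sup|\tilde X^n-X|$ (which sits inside $\sup|Y^n-Y|$ through the diffusion increments) with the bounded-variation processes in a way that is not immediately of Gronwall type. The remedy is to first estimate $\|\sup_{t\le T}|Y^n_t-Y_t|\|_{2p}$ \emph{in terms of} $\int_0^T(E\sup_{s\le u}|\tilde X^n_s-X_s|^{2p})^{1/(2p)}du$ plus the $\mathcal O((\ln n/n)^{1/2})$ boundary term, then feed the Skorokhod-stability inequality and the uniform moment bounds (\ref{eq2.9}), (\ref{eq4.2}) back in, and only then apply Gronwall — carefully tracking that each appearance of $\sup|\tilde X^n-X|$ on the right is multiplied either by a small factor or by an $\LL^q$-bounded quantity with the help of Hölder's inequality. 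The localization via $\tau_n^R$ used in Theorem \ref{thm2} is not needed here because the Lipschitz hypothesis already delivers the moment bounds outright.
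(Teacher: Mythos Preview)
Your plan for part (i) is correct and coincides with the paper's argument: the Lipschitz continuity of the Skorokhod map on a convex polyhedron (Dupuis--Ishii) reduces the problem to estimating $\sup_{s\le t}|Y^n_s-Y_s|$, which after BDG, the Lipschitz bound (\ref{eq1.4}), and Theorem~\ref{thm1}(i) closes by Gronwall exactly as you describe.

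For part (ii), however, there is a genuine gap. The crude stability bound you invoke,
\[
\sup_{t\le T}|\tilde X^n_t-X_t|^2\le C\big(\sup_{t\le T}|Y^n_t-Y_t|^2+\sup_{t\le T}|Y^n_t-Y_t|\,(|K^n|_T+|K|_T)\big),
\]
does \emph{not} lead to a Gronwall inequality for $\varphi(t)=E\sup_{s\le t}|X^n_s-X_s|^p$. After Cauchy--Schwarz on the cross term and BDG/Lipschitz on $\bar Y^n-Y$, you obtain an inequality of the shape
\[
\varphi(t)\le C\Big(\big(\tfrac{\ln n}{n}\big)^{p/2}+\Psi(t)+\Psi(t)^{1/2}\Big),\qquad \Psi(t)=\big(\tfrac{\ln n}{n}\big)^{p/2}+\int_0^t\varphi(s)\,ds.
\]
This is \emph{nonlinear} in $\Psi$; from $\Psi'\le C(\Psi+\Psi^{1/2})$ with $\Psi(0)$ small one cannot conclude that $\Psi(T)$ is small (the ODE $\Psi'=\Psi^{1/2}$ blows up to order $T^2$ regardless of $\Psi(0)$). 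Your proposed remedy---passing to the $2p$ level first---runs into the same square-root nonlinearity and does not close the loop either.

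The paper bypasses this by \emph{not} using the uniform $\sup|Y^n-Y|\cdot(|K^n|+|K|)$ bound. Instead it starts from Tanaka's pointwise inequality
\[
|\Pi(X^n_t)-X_t|^2\le |\bar Y^n_t-Y_t|^2+2\int_0^t(\bar Y^n_t-Y_t-\bar Y^n_s+Y_s)\,d(K^n_s-K_s)+\text{(boundary terms)},
\]
and then applies the integration-by-parts identity
\[
\int_0^t(\bar Y^n_t-Y_t-\bar Y^n_s+Y_s)\,d(K^n_s-K_s)=\int_0^t(X^n_s-X_s)\,d(\bar Y^n_s-Y_s)+\tfrac12\big([\bar Y^n-Y]_t-|\bar Y^n_t-Y_t|^2\big),
\]
using $K^n-K=(X^n-X)-(\bar Y^n-Y)$. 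The point is that the integrator $\bar Y^n-Y$ has quadratic variation and total variation controlled by $\int_0^t|X^n_s-X_s|^2\,ds$ via (\ref{eq1.4}); after BDG, Schwarz, and the Young inequality $2ab\le \epsilon^2a^2+b^2/\epsilon^2$ (to absorb $(E\sup|X^n-X|^p)^{1/2}$ into the left-hand side), one obtains a genuine linear Gronwall inequality
\[
E\sup_{s\le t}|X^n_s-X_s|^p\le \const\Big(\big(\tfrac{\ln n}{n}\big)^{p/4}+\int_0^tE\sup_{u\le s}|X^n_u-X_u|^p\,ds\Big).
\]
This integration-by-parts step is the missing idea in your plan for (ii).
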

\begin{proof}  Fix $T>0$. Without loss of generality we may assume that $p\geq2$.

(i)   By   Theorem 2.2 from Dupuis and Ishi \cite{di} there exists
  $C>0$  such that
\begin{eqnarray}
\nonumber&&\sup_{s\leq t}|\Pi(X^n_s)- X_s|\leq C \sup_{s\leq t}|
Y^n_s- Y_s|\\&&\qquad\qquad\label{eq4.3} \leq C\big(\sup_{s\leq
t}|\Pi(X^n_s)- X^n_s|+ \sup_{s\leq t}|\bar Y^n_s- Y_s|\big)
\end{eqnarray} for every $t\leq T$, where $\bar
Y^n_s=x_0+\int_0^s\sigma(u,X^n_u)dW_u+\int_0^sb(u,X^n_u)du$,
$Y^n_s=\bar Y^n_s+\Pi(X^n_s)-X^n_s$, $s\leq T$, $n\in\N$.
Therefore, by Theorem \ref{thm1}(i), Burkholder-Davis-Gundy and
Schwarz's inequalities,
\begin{eqnarray*} & & E\sup_{s\leq t}|X^n_s- X_s|^{p} \leq \const(
(\frac{\ln n}{n})^{p/2}+
E\sup_{s\leq t}|\bar Y^n_s-Y_s|^{p}\nonumber\\
& &\qquad\qquad \leq \const( (\frac{\ln
n}{n})^{p/2}+E\int_0^t||\sigma(s,X^n_s)-\sigma(s,X_s)||^pds\\
&&\qquad\qquad\qquad+E\int_0^t|b(s,X^n_s)-b(s,X_s)|^{p}ds )
\end{eqnarray*}
 for every $t\leq T$. By the above  and  (\ref{eq1.4}), \[E\sup_{s\leq
t}|X^n_s- X_s|^{p}
 \leq \const( (\frac{\ln
n}{n})^{p/2}+\int_0^tE\sup_{u\leq s}|X^n_u-X_u|^{p}ds ) \]for
every $t\leq T$, so (i) follows by Gronwall's lemma.

(ii) If  $D$ is a general convex domain  then by Lemma 2.2 in
Tanaka  \cite{ta},
\begin{eqnarray}
&&|\Pi(X^n_t)- X_t|^2\leq  |\bar Y^n_t-Y_t|^2+ 2\int_0^t (Y^n_t-
Y_t-Y^n_s+Y_s)\,d(K^n_s- K_s)|\nonumber\\
&&\qquad\leq\const\big(|\Pi(X^n_t)- X^n_t|^2+
|\bar Y^n_t- Y_t|^2+\sup_{t\leq T}|\Pi(X^n_t)- X^n_t|(K^n|_T+|K|_T)\nonumber\\
&&\quad\qquad+ \label{eq4.4}|\int_0^t (\bar Y^n_t- Y_t-\bar
Y^n_s+Y_s)\,d(K^n_s- K_s)|
\end{eqnarray}
for every $t\leq T$. Since by the integration by parts formula,
\begin{eqnarray*}
&&\int_0^t (\bar Y^n_t- Y_t-\bar Y^n_s+Y_s)\,d(K^n_s-
K_s)\\
&&\qquad=\int_0^t (X^n_s-X_s)d(\bar Y^n_s-Y_s)+\frac12([\bar
Y^n-Y]_t-|\bar Y^n_t-Y_t|^2)\end{eqnarray*} (here $[\bar Y^n-Y]$
denotes the quadratic variation of $\bar Y^n-Y$), it follows from
Theorem \ref{thm1}(i) and (\ref{eq4.4}) that
\begin{eqnarray*}
&&E\sup_{s\leq t}|X^n_s- X_s|^{p} \leq \const( (\frac{\ln
n}{n})^{p/2}+
E\sup_{s\leq t}|\bar Y^n_s-Y_s|^{p}+E([\bar Y^n-Y]_t)^{p/2}\\
&&\qquad\qquad\qquad\qquad\qquad+E(\sup_{t\leq T}|\Pi(X^n_t)-
X^n_t|)^{p/2}(|K^n|_t+|K|_t)^{p/2}\\
 & &\qquad\qquad\qquad\qquad\qquad+E\sup_{s\leq t}|\int_0^t (X^n_s-X_s)d(\bar
 Y^n_s-Y_s)|^{p/2}\big).
\end{eqnarray*}
By Schwarz's inequality, Theorem \ref{thm1}(i), (\ref{eq2.9}) and
(\ref{eq4.2}),
\[
E(\sup_{t\leq T}|\Pi(X^n_t)-
X^n_t|)^{p/2}(|K^n|_T+|K|_T)^{p/2}\leq \const((\frac{\ln
n}{n})^{p/4}).\] Since $Y^n-Y$ is a continuous semimartingale with
a martingale part
$M^n=\int_0^{\cdot}\sigma(s,X^n_s)-\sigma(s,X_s)dW_s$ and a
bounded variation part $V^n=\int_0^{\cdot}b(s,X^n_s)-b(s,X_s)ds$,
using  Burk\-hol\-der-Davis-Gundy and Schwarz's inequalities we
get
\begin{eqnarray*}
&&E\sup_{s\leq t}|\int_0^t (X^n_s-X_s)d(\bar
 Y^n_s-Y_s)|^{p/2}\leq
 \const\big(E(\int_0^t|X^n_s-X_s|^2d[M^n]_s)^{p/4}\\
 &&\qquad\qquad\qquad\qquad+E(\sup_{s\leq t}|X^n_s-X_s||V^n|_t)^{p/2}\big)\\
 &&\qquad\qquad\qquad\leq \const(E\sup_{s\leq
 t}|X^n_s-X_s|^p)^{1/2}(E([M^n]_t)^{p/2}+(|V^n|_t)^p)^{1/2}.\end{eqnarray*}
Observing that $[\bar Y^n-Y]=[M^n]$  and using the elementary
inequality
 $2ab\leq \epsilon^2 a^2+(b/\epsilon)^2$ with some sufficiently small $\epsilon$ we deduce from the above that
 \begin{eqnarray*}
&&E\sup_{s\leq t}|X^n_s- X_s|^{p} \leq \const\big( (\frac{\ln
n}{n})^{p/4}+ E\sup_{s\leq t}|\bar
Y^n_s-Y_s|^{p}\\
&&\qquad\qquad\qquad\qquad\qquad+E([M^n]_t)^{p/2}+E(|V^n|_t)^p)\big)
\\&&\qquad\qquad\qquad\leq \const\big( (\frac{\ln
n}{n})^{p/4}+E\int_0^t||\sigma(s,X^n_s)-\sigma(s,X_s)||^pds\\
&&\qquad\qquad\qquad\qquad\qquad+E\int_0^t|b(s,X^n_s)-b(s,X_s)|^{p}ds
\big)\\ &&\qquad\qquad\qquad\leq\const\big ( (\frac{\ln
n}{n})^{p/4}+\int_0^tE\sup_{u\leq s}|X^n_u-X_u|^{p}ds \big)
\end{eqnarray*}
for every $t\leq T$. Using Gronwall's lemma  completes the proof.
\end{proof}

\begin{remark}{\rm
In the case of bounded convex domains and bounded Lipschitz
continuous coefficients $\sigma,b$ the problem of $\LL^p$
approximation of solutions of (\ref{eq1.1}) by sequences of
solutions of (\ref{eq4.1}) was considered earlier in Menaldi
\cite{me}. In  particular, in \cite[Theorem 3.1]{me} it is proved
that for every $p\geq1$ and  $T>0$,  $||\sup_{t\leq
T}|X^n_t-X_t|||_p\to0$. From the  proof of  \cite[Theorem 3.1]{me}
 one can also deduce  that
\begin{equation}\label{eq4.5}
\forall_{\delta>0}\,\,\,||\sup_{t\leq T}|X^n_t-X_t|||_p={\cal
O}\big((\frac{1}{n})^{1/4-\delta}\big).\end{equation} In fact, in
\cite[Remark 3.1]{me}  a better rate is stated. However, R.
Pettersson has observed that there is a  gap in the proof of
\cite[Theorem 3.1]{me} (in the first line on  page 741 $p$ should
be replaced by $2p$). Using  Menaldi's calculations  and taking
into account Pettersson's remark one can  only prove
(\ref{eq4.5}). It is also worth pointing out that  Menaldi's
method of proof of (\ref{eq4.5}) is completely different from our
method based on estimates of $\LL^p$-modulus of continuity for
It\^o processes. }
\end{remark}
\mbox{}\\
{\bf Acknowledgements}\\The author is greatly indebted to R.
Pettersson  for stimulating conversation during the conference
"Skorokhod space. 50 years on".

{\small

}
\end{document}